\documentclass[12pt]{amsart}

\usepackage{lipsum}
\usepackage{amsfonts}
\usepackage{amsmath,amssymb}
\usepackage{graphicx}
\usepackage{epstopdf}
\usepackage{algorithmic}
\usepackage{color}
\ifpdf
  \DeclareGraphicsExtensions{.eps,.pdf,.png,.jpg}
\else
  \DeclareGraphicsExtensions{.eps}
\fi

\usepackage{enumitem}
\setlist[enumerate]{leftmargin=.5in}
\setlist[itemize]{leftmargin=.5in}

\theoremstyle{plain}
\newtheorem{theorem}{Theorem}[section]
\newtheorem{lemma}[theorem]{Lemma}

\theoremstyle{definition}
\newtheorem{definition}[theorem]{Definition}

\theoremstyle{remark}
\newtheorem{remark}[theorem]{Remark}

\newcommand{\R}{\mathbb{R}}

\newcommand{\M}{\mathcal{M}}
\newcommand{\PA}{\mathcal{P}}
\newcommand{\SA}{\mathcal{S}}
\newcommand{\X}{\mathcal{X}}
\newcommand{\0}{\textbf{0}}

\title[Exploding and vanishing gradients]{Exploding and vanishing gradients in  deep neural networks: the effect of residual connections}

\author{Vivek S Borkar}

\address{Department of Electrical Engineering, Indian Institute of Technology Bombay, 
Mumbai 400076, India.}
\email{borkar.vs@gmail.com}

\usepackage{amsopn}

\keywords{deep neural networks; exploding and vanishing gradients; multiplicative ergodic theorem; Liapunov exponents; ResNet}

\subjclass{68T07, 37H15}

\thanks{Work of VSB was supported by a grant from Google Research India and a National Science Chair from the Government of India.}

\begin{document}

\maketitle

\begin{abstract}
The well known phenomenon of exploding and vanishing gradients in deep neural networks is analyzed using multiplicative ergodic theory. The effect of adding a residual connection is explained in this context. Specifically, a characterization of Liapunov exponents due to Furstenberg and Kifer \cite{FK} is exploited in order to make a precise statement about the Liapunov spectrum and the effect of residual connections on it.
\end{abstract}




\section{Introduction}

One of the phenomena that has been observed in the training of Deep Neural Networks (DNNs) is that of exploding and decaying gradients. This refers to the empirically observed fact that on application of stochastic gradient descent (SGD) for their training, the gradients in certain directions may asymptotically tend to vanish or blow up in norm. There is a considerable literature for explaining this phenomenon and also for schemes to ameliorate this effect, see \cite{Allen-Zhu, Avelin, Chen, Engleken, Hanin1, Hanin2, Storm} for a small sampler. Our objective here is to present a different take on this problem using multiplicative ergodic theory and Liapunov exponents. In particular, we use an exact characterization of Liapunov exponents due to Furstenberg and Kifer \cite{FK} in order to precisely pin down the effect of residual connections. \\

To put our work in context, we briefly recall the approach taken in some of the prior works to explain this phenomenon and highlight the basic differences vis-a-vis the present effort. We discuss only a few representative works that are closest to ours in flavour, particularly in the common feature  of using multiplicative ergodic theory. \\

The closest to us in spirit is \cite{Storm}. Indeed this work applies multiplicative ergodic theory to flag the role of Liapunov exponents in shaping the asymptotic landscape of the input-output map of a DNN. What is new in our work is  the use of the exact characterization of the Liapunov exponents due to Fudenberg and Kifer \cite{FK} that allows us to go significantly further in order to pin down the Liapunov spectrum in greater detail and characterize the effect of residual connections on it in a precise mathematical fashion using projective geometry. \\

A similar comparison applies to \cite{Chen} which, under additional structural assumptions on the DNN, argues that residual connections will `tame' the Liapunov exponents in a certain sense, see \textit{ibid.} for details. An early work in this direction is \cite{Avelin}, which again applies multiplicative ergodic theory to DNNs in the manner that we do, but does not go for precise characterization of the Liapunov spectrum.\\

Other related works include \cite{Hanin1}, \cite{Hanin2}. In \cite{Hanin1}, random fully connected ReLU networks are considered and moments of norms of  Jacobian matrices of their input-output maps are estimated. In \cite{Hanin2}, again a random DNN is analyzed in the limit as its depth and the space dimension simultaneously go to infinity and asymptotic gaussianity of the norms  of the associated Jacobian matrix in a suitable scaling limit is established. See also \cite{Zhang} for some early work in this direction.\\ 

In an alternative approach, \cite{Allen-Zhu} frames DNN learning as hierarchical learning.\\

To summarize, the key distinction between the present work and its predecessors is the use of  the exact characterization of  Liapunov spectrum in multiplicative ergodic theory due to \cite{FK}, in order to make more precise statements about the effect of residual connections in DNNs.\\

This article is organized as follows. The next section, Section \ref{notation}, sets up the notation and states the problem. Section \ref{main}  recalls the key mathematical results from multiplicative ergodic theory that we plan to use, notably the aforementioned result due to Furstenberg and Kifer \cite{FK}.  Section \ref{resnet} explores the effect of residual connections on the DNNs, motivated by ResNet, using this result of \cite{FK}. \\

\section{Notation and Preliminaries}\label{notation}

We begin this section by describing up front the mapping from DNN to a discrete dynamical system which lies at the core of this work and some of the related works discussed above. This will be followed by setting up of the core notation that will be operative throughout. \\

\subsection{DNN as a dynamical system}

Fix $d \geq 1$. As a deep neural network, we consider a concatenation of $N \gg 1$ blocks of feedforward neural networks, with inputs and outputs from $\R^d$ to itself. This will be our DNN. (DNNs in general allow different dimensions for each layer. See Remark \ref{DIM} below.) \\

The component networks are assumed to belong to a parametrized family $f_\beta, \beta \in \R^s$ for some $s \geq 1$ and are assumed to be jointly continuously differentiable in $\beta$ and the input. This includes, e.g., sigmoidal networks, but not ReLU. We denote by $\beta^N$ the vector 
$$[\beta_N, \beta_{N-1}, \cdots , \beta_1] \in \R^{sN}$$
and by $f^N_{\beta^N} : \R^d \to \R^d$ the composition thereof, i.e.,
\begin{equation}
f^N_{\beta^N} := f_{\beta_N}\circ f_{\beta_{N-1}} \circ \cdots \circ f_{\beta_1} \circ f_{\beta_0} : \R^d \to \R^d. \label{DNNmap}
\end{equation}
Here `$\circ$' denotes composition of functions, i.e.\ $f\circ g(x) := f(g(x))$. \\

There is a standard way of mapping this composition into iterates of an equivalent discrete time dynamical system. Specifically, let $x_0 \in \R^d$ and recursively define 
\begin{equation}
x_{n+1} = f_{\beta_n}(x_n), \ n \geq 0. \label{dynamics0}
\end{equation}
Then inductively, one has
$$f^N_{\beta^N}(x_0) = x_N, N \geq 1$$
and vice versa. Thus the two descriptions are completely equivalent.\\

Let  $D(\beta)(x) :=$  the $d\times d$ Jacobian matrix of $f_\beta$ evaluated at $x$, parametrized by $\beta$. Likewise, let $D^N(\beta^N)(x^N):=$  the $d\times d$ Jacobian matrix of  $f^N$, parametrized by $\beta^N$ and evaluated at $x^N := [x_N, \cdots , x_0]$. Then by the chain rule of differentiation,
\begin{equation}
D^N(\beta^N)(x^N) = \prod_{m=0}^{N}D(\beta_{N-m})(x_{N-m}). \label{chainrule}
\end{equation}
Let $\eta_0\in \R^d$. We shall be interested in the asymptotic behaviour of 
\begin{eqnarray}
\eta^N &:=& \left(\prod_{m=0}^{N}D(\beta_{N-m})(x_{N-m})\right)\eta_0. \label{mult} \\
&=& D(\beta_N)(x_N)\eta^{N-1}. \label{dynamics1}
\end{eqnarray}
Equation \eqref{dynamics1} represents a time-inhomogeneous linear dynamical system with $\beta_n$'s serving as parameters. The second equality is the counterpart of the equivalence of  \eqref{DNNmap} and \eqref{dynamics0} mentioned above, but for the \textit{linearized dynamics} given by \eqref{dynamics1} as opposed to the original dynamics \eqref{dynamics0}.\\

Summarizing, this dynamical view of DNNs is tantamount to:\\

\begin{enumerate}
\item viewing input as the initial condition of a nonlinear dynamics,\\

\item the layer index $m \geq 0$ as a discrete time index,\\

\item the input-output map $f_{\beta_i}$ of the $i$th layer of the feedforward network as a time-dependent map that dictates the evolution of the dynamics at time $i$ from the state at time $i$ to the state at time $i+1$,\\

\item $\beta_i$'s are parameters of the next step map at time $i$, $i \geq 0$. 
 
 \end{enumerate}
 
 \medskip
 
 This equivalence allows us to apply techniques from the theory of nonlinear dynamics to DNNs as the depth of the DNN tends to infinity.\\
 
 \subsection{Notation}

 We shall use the following additional notation.\\
 
 \begin{enumerate}

\item We denote by $\M$ the set of $d\times d$ matrices and by $\M^+ \subset \M$ the subset thereof consisting of nonsingular $d\times d$ matrices. \\

\item We denote by $\0$ the zero vector in the appropriate dimension, depending on the context.\\

\item We denote by $\PA(\X)$ the Polish\footnote{i.e., a separable Hausdorff space which has a compatible complete metric} space of probability measures on the Polish space $\X$ with Prokhorov topology (also known as the topology of weak convergence). This topology is defined as the coarsest topology that renders continuous the maps $\zeta \in \PA(\X) \mapsto \int fd\zeta\in \R$ for $f \in C_b(\X) :=$ the space of bounded continuous functions on $\X$. (See, e.g., \cite{Bill} for a detailed exposition of this topology.)\\

\item We denote by $S^d$ the unit sphere in $\R^d$ and denote by $\Phi : \R^d\backslash\{\0\} \to S^d$ the map $x \mapsto \frac{x}{\|x\|}$ when $x \neq \0$.\\

\item We denote by $P^{d-1}$ the projective space, i.e., the space of  equivalence classes of nonzero vectors in $\R^d$ under the equivalence relation $x \equiv \lambda x$ for $\lambda \in \R\backslash\{\0\}$. If $u \in P^{d-1}$, we denote by $\hat{u}$ a generic element of $\R^d$ that gets mapped to $u$ under this equivalence relation. \\

\item We denote by $\Psi : \R^d\backslash\{\0\}  \to P^{d-1}$ the map that maps $x \in \R^d$ to the corresponding element of $P^{d-1}$. $P^{d-1}$ is endowed with the natural metric 

$$d(u,u') \ := \ \min\left(\left\|\frac{\hat{u}}{\|\hat{u}\|}-\frac{\hat{u}'}{\|\hat{u}'\|}\right\| \ , \ \left\|\frac{\hat{u}}{\|\hat{u}\|}+\frac{\hat{u}'}{\|\hat{u}'}\right\|\right),$$

\medskip

for $\hat{u}\in \Psi^{-1}(u)$ and $\hat{u}' \in \Psi^{-1}(u')$.\\

\item $\mu$ is a prescribed probability measure on $\M$.

\end{enumerate}

\medskip

We next recall the Furstenberg-Kifer theorem of multiplicative ergodic theory, which will be the basis of our analysis of DNNs.

\section{Furstenberg-Kifer theorem}\label{main}

Multiplicative ergodic theorems go back to \cite{Osel}. The basic result and the associated theory of Liapunov exponents has been extended in many directions, see \cite{Arnold} for a comprehensive treatment. We use here a variant due to Furstenberg and Kifer \cite{FK} which has the additional feature of giving a clean characterization of the Liapunov spectrum and the resulting direct sum decomposition of the state space, in terms of an associated Markov chain on the projective space. This needs the following additional assumption.\\

\noindent \textbf{Assumption 1:}\label{Ass} $$\int\left(\log^+\|A\| + \log^+\|A^{-1}\|\right)\mu(dA) < \infty.$$

\medskip

%

\begin{definition}\label{MKernel} Given a $\mu \in \PA(\M)$ and a $\nu \in \PA(P^{d-1})$, we define $\mu*\nu \in \PA(P^{d-1})$ by
\begin{equation}
\int \varphi d(\mu*\nu) := \int \varphi(Ax)\mu(dA)\nu(dx) \ \ \ \ \forall \ \varphi \in C_b(\M). \label{star}
\end{equation}
\end{definition}

\medskip

\begin{lemma}\label{extreme} The set $\SA := \{\nu \in \PA(P^{d-1}) : \mu*\nu = \nu\} \subset \PA(P^{d-1})$ is a nonempty  and compact simplex whose extreme points are mutually singular. \end{lemma}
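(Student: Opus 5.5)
The plan is to read $\mu*\cdot$ as the adjoint of a Markov operator on $P^{d-1}$ and then apply the standard theory of invariant measures of Feller chains on compact spaces. Under the standing assumption that $\mu$ is supported on $\M^+$, which is implicit in Assumption 1 since $\|A^{-1}\|$ must make sense, each $A$ acts on $P^{d-1}$ by $u \mapsto \Psi(A\hat{u})$. This action is well defined because $A\hat{u}\neq\0$, and it is continuous in $(A,u)$. Define the transition kernel $p(u,dv) := \mu\{A : \Psi(A\hat{u}) \in dv\}$ and the operator $T\varphi(u) := \int \varphi(Au)\mu(dA)$ for $\varphi \in C(P^{d-1})$. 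Then $\mu*\nu = \nu$ says exactly that $\nu$ is invariant for $p$, i.e.\ $\int T\varphi\, d\nu = \int \varphi\, d\nu$. By dominated convergence, $T$ maps $C(P^{d-1})$ into itself, so $p$ is Feller.

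\textbf{Nonemptiness and compactness.} $P^{d-1}$ is a compact metric space, so $\PA(P^{d-1})$ is compact in the Prokhorov topology. Fix any $\nu_0$ and form the Krylov--Bogoliubov averages $\bar\nu_n := \frac1n\sum_{k=0}^{n-1}\mu^{*k}*\nu_0$. Along some subsequence these converge to some $\nu$. For continuous $\varphi$ we have $|\int \varphi\, d(\mu*\bar\nu_n) - \int\varphi\, d\bar\nu_n| \le 2\|\varphi\|_\infty/n$. Since $\nu \mapsto \mu*\nu$ is weakly continuous (because $T\varphi$ is continuous), passing to the limit gives $\mu*\nu=\nu$, so $\SA \neq \emptyset$. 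The same continuity shows that $\SA$ is closed in the compact set $\PA(P^{d-1})$, hence compact. It is also convex, because the defining equation is affine in $\nu$.

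\textbf{Simplex structure and mutual singularity.} This is the main step. The plan is to identify the extreme points of $\SA$ with the ergodic invariant measures, meaning those for which every $p$-invariant set $B$ (with $p(u,B)=1_B(u)$ $\nu$-a.e.) has $\nu(B)\in\{0,1\}$.

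First, if $\nu$ is not ergodic, pick an invariant $B$ with $0<\nu(B)<1$. Then $\nu(\cdot|B)$ and $\nu(\cdot|B^c)$ are both invariant, and writing $\nu$ as a proper convex combination of them shows that $\nu$ is not extreme.

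Conversely, suppose $\nu$ is ergodic and $\nu = t\nu_1 + (1-t)\nu_2$ with $\nu_i \in \SA$ and $0<t<1$. Then $\nu_1 \ll \nu$. Set $h = d\nu_1/d\nu$; invariance of $\nu_1$ gives $T^*h = h$ in $L^1(\nu)$. The usual argument (for instance via the sets $\{h>c\}$, or the $L^2$ identity $\int (Th-h)^2 d\nu=\ldots$ applied to the bounded truncation $h\wedge c$) shows that $h$ is $p$-invariant $\nu$-a.e. Ergodicity then forces $h$ to be constant, so $\nu_1=\nu$, and $\nu$ is extreme.

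For two distinct ergodic measures $\nu,\nu'$, take a continuous $\varphi$ with $\int\varphi\, d\nu\neq\int\varphi\, d\nu'$. Let $B$ be the set of $u$ for which the Birkhoff averages of $\varphi$ along the chain started at $u$ converge almost surely to $\int\varphi\, d\nu$. By the ergodic theorem for stationary Markov chains (Birkhoff applied to the stationary chain run with initial law $\nu$), $\nu(B)=1$, and likewise $\nu'(B)=0$. Hence $\nu$ and $\nu'$ are mutually singular.

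Finally, the simplex property (uniqueness of the barycentric decomposition) follows from Choquet theory on the compact convex metrizable set $\SA$, combined with ergodic decomposition. Every invariant measure is then the barycenter of a unique probability on the ergodic measures: uniqueness follows because mutually singular extreme points can be separated by invariant sets, which determine the mixing measure. Alternatively, one can simply cite the standard result that the invariant measures of a Markov kernel form a Choquet simplex. I expect the key technical point to be the invariance of the density $h$, i.e.\ showing that the kernel preserves $\nu$-absolute continuity in the right way; I would handle it by truncation and Jensen's inequality.
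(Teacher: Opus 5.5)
Your proposal is correct and follows the same route as the paper. Both identify $\SA$ with the invariant probability measures of the Feller kernel induced by the projective action of $\mu$ on the compact space $P^{d-1}$, and then use the standard theory of such chains. The paper simply cites that theory, whereas you unpack it via Krylov--Bogoliubov, the extreme-equals-ergodic argument, Birkhoff's theorem for mutual singularity, and a citation for the Choquet-simplex property; all of these steps are sound. You also make explicit the $\mu$-a.s.\ invertibility needed for the action to be defined, which the paper leaves implicit. In the extreme-equals-ergodic step, $h\le 1/t$ is already bounded, so no truncation is needed.
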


\begin{proof} This is immediate from the discussion in \cite{FK}, pp.\ 17-18, where the correspondence of $\SA$ with the stationary distributions of a Markov chain is established. Specifically, define the transition kernel 
$$\kappa : P^{d-1} \to \PA(P^{d-1})$$ 
by
$$\int f(y)\kappa(dy|x) := \int_{P^{d-1}}f\left(\frac{Ax}{\|Ax\|}\right)\mu(dA) \ \ \forall \ f \in C\left(P^{d-1}\right).$$
(Equivalently, $$\kappa(B|x) := \int_{P^{d-1}}I\left\{\left(\frac{Ax}{\|Ax\|}\right)\in B\right\}\mu(dA)$$
for all Borel $B \subset P^{d-1}$.) \\

Thus $\nu$ is simply an invariant measure of this transition kernel by Definition \ref{MKernel}. Since $P^{d-1}$ is compact and the map $x \to \kappa(dy|x)$ is  seen to be continuous in $x$, it follows from standard Markov process theory \cite{Benaim}, \cite{MeynTweedie} that the set of such probability measures $\nu$ forms a nonempty compact simplex whose extreme points are mutually singular. \end{proof}

\medskip

We call $\SA$ the set of $\mu$-stationary measures in $\PA(P^{d-1})$. We shall also need the following definitions.\\

\begin{enumerate}

\item For a subspace $Z$ of $\R^d$, denote by $\overline{Z}$ the set of corresponding elements of $P^{d-1}$.\\

\item For $\mu \in \PA(\M)$, say that a subspace $Z$ of $\R^d$ is  \textit{$\mu$-invariant} if it is invariant under $\mu$-a.s.\ $A \in \M$.
\end{enumerate}

\begin{remark}\label{DIM} A priori, our assumption that all $f_{\beta_i}$'s map $\R^d$ to itself is not restrictive. We can take the $d < \infty$ to be an upper bound (assumed to exist) on the dimensionality of the input or output space of the individual networks and set the appropriate connection weights to zero if the actual dimension is lower. However,  Assumption 1 above is  restrictive. It is required in order to be able to use the theory of \cite{FK}.  Our aim is to demonstrate a plausible mechanism for explaining the observed benefits of residual connections in a quantitative fashion, albeit for a stylized model. Since matrices satisfying Assumption 1 are dense in $\M$, this exercise is not entirely unreasonable. That said, it will need a lot more sophisticated mathematics (in particular, a suitable extension of Theorem \ref{FuKe} in order to push these results to full generality, which is a task for the future. \end{remark}

\medskip

With this notation and caveats, we next state the key results from \cite{FK} that are relevant for our purposes. These have been recast in our notation.\\

Under stated hypotheses, \cite{FK} proves the following (See Theorems 3.9 and 3.10 of \textit{ibid.}).

\begin{theorem}\label{FuKe} $(i)$ There exist an integer $1 \leq r \leq d$, a sequences of subspaces of $\R^d$
$$\{0\} \subset L_r \subset L_{r-1} \subset \cdots \subset L_2 \subset L_1 \subset L_0 = \R^d$$
and a sequence of real numbers
$$\gamma(\mu) := \gamma^0(\mu) > \gamma^1(\mu) > \gamma^2(\mu) > \cdots > \gamma^r(\mu)$$
such that, if $v\in L_i\backslash L_{i+1}$, then for $\{X_n\}$ i.i.d.\ with law $\mu$,
$$\lim_{N\uparrow\infty}\frac{1}{N}\log\left\|X_NX_{N-1}\cdots X_1v\right\| = \gamma^i(\mu).$$

$(ii)$ These $\{\gamma^i(\mu), 0 \leq i \leq r\}$ are precisely the discrete values taken by the quantity
\begin{equation}
\alpha(\mu,\nu) := \int_\M\int_{P^{d-1}}\log\left(\|Au\|\right)\mu(dA)\nu(du) \label{alpha}
\end{equation}
as $\nu$ varies over all $\mu$-stationary measures. Also, $L_i :=$ the unique maximal $\mu$-invariant subspace of the set of all $\mu$-invariant subspaces satisfying $\nu(\overline{L}_i) = 0$ for all $\nu$ with $\alpha(\mu,\nu) > \gamma^i(\mu)$.
\end{theorem}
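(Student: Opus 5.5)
The plan is to prove the Furstenberg--Kifer filtration by working with the Markov chain on $P^{d-1}$ with kernel $\kappa$ from the proof of Lemma \ref{extreme}. Throughout, the growth rate is expressed as an additive functional of this chain.

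\textbf{Cocycle identity.} For $v\neq\0$, write $u_n:=\Psi(X_n\cdots X_1 v)$. Telescoping gives
$$\frac1N\log\|X_N\cdots X_1v\| \;=\; \frac1N\log\|v\| + \frac1N\sum_{n=1}^{N}\log\|X_n\hat u_{n-1}\|,$$
with $\hat u_{n-1}$ a unit representative. The function $\varphi(u):=\int\log\|A\hat u\|\mu(dA)$ is bounded and continuous on $P^{d-1}$: Assumption 1 bounds $|\log\|A\hat u\||$ by $\log^+\|A\|+\log^+\|A^{-1}\|$, and dominated convergence then gives continuity. Consequently $\alpha(\mu,\nu)=\int\varphi\,d\nu$.

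\textbf{Limits are values of $\alpha$.} The sum above equals $\sum\varphi(u_{n-1})$ plus a martingale with square-integrable (or at least $L^1$-dominated) increments. By the martingale strong law this correction is $o(N)$ a.s. The empirical measures $\frac1N\sum_{n<N}\delta_{u_n}$ are tight, since $P^{d-1}$ is compact. By the Feller property every limit point lies in $\SA$. Hence every limit point of the normalized log-norm lies in $\{\alpha(\mu,\nu):\nu\in\SA\}$, and by Lemma \ref{extreme} this set is the closed convex hull of the values at ergodic extreme points.

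\textbf{Defining the filtration.} Let $\gamma^0:=\max_{\nu\in\SA}\alpha(\mu,\nu)$, which is attained by compactness of $\SA$. A Furstenberg-type argument shows that for every $v\neq\0$, $\limsup$ of the normalized log-norm is at most $\gamma^0$, and that $\lim\frac1N\log\|X_N\cdots X_1\|=\gamma^0$. Define $L_1$ as the largest $\mu$-invariant subspace with $\nu(\overline{L}_1)=0$ for every $\nu$ attaining $\gamma^0$. Invariant subspaces form a lattice closed under sums, and the condition is preserved under finite sums, because a stationary measure charging $\overline{Z_1+Z_2}$ but neither $\overline{Z_i}$ would have to be supported on a quotient. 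So this maximal subspace exists and is a proper subspace.

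\textbf{Induction.} Restrict the cocycle to $L_1$ and repeat. Stationary measures for the restricted cocycle are exactly the stationary measures supported on $\overline{L}_1$. The new top exponent is then strictly smaller, which gives $\gamma^1<\gamma^0$. Finiteness of $r\le d$ follows because the dimension drops at each step. Part (ii) follows because every value $\alpha(\mu,\nu)$ of an ergodic $\nu$ is realized by some $v$: take $v$ generic for $\nu$, using the ergodic theorem for the chain started from $\nu$.

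\textbf{Main obstacle.} The hard step is showing that for $v\in L_0\setminus L_1$ the limit actually equals $\gamma^0$, and not merely some smaller convex combination. The plan is to pass to the quotient cocycle on $\R^d/L_1$. By maximality of $L_1$, every stationary measure of the quotient has $\alpha=\gamma^0$. In other words, $\varphi$ integrates to the constant $\gamma^0$ against all stationary measures, which forces uniform convergence of the Birkhoff averages. The growth of $\|X_N\cdots X_1 v\|$ is bounded below by the growth of its image in the quotient, giving $\liminf\ge\gamma^0$. Combined with the upper bound $\gamma^0$, this yields the limit. Proving that this "all stationary measures give the same value implies uniform convergence" claim holds, and that the quotient's stationary measures have the required property, is the delicate part. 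Here I would follow Furstenberg--Kifer's Theorems 3.9--3.10 closely.
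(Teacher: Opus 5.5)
The paper gives no proof of its own here; it quotes Theorems 3.9--3.10 of \cite{FK}. Several parts of your outline are correct: the cocycle and martingale decomposition, the stationarity of limit points of empirical measures, $\gamma^0=\max_{\SA}\alpha$, the definition of $L_1$ by maximality, and the recursion into $L_1$. Your reading of (ii) through ergodic $\nu$, and of $L_1$ through measures attaining $\gamma^0$, is also the right one: for $\mu=\delta_{\mathrm{diag}(2,1)}$ the mixture $\frac12(\delta_{\Psi(e_1)}+\delta_{\Psi(e_2)})$ gives an intermediate $\alpha$ and charges $\overline{L}_1$.

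As a proof, however, there is a genuine gap at the sentence ``By maximality of $L_1$, every stationary measure of the quotient has $\alpha=\gamma^0$,'' which you then hand back to \cite{FK}. Maximality of $L_1$ concerns subspaces of $\R^d$ and measures on $P^{d-1}$. By itself it says nothing about stationary measures on the projective space of $\R^d/L_1$. When $L_1=\{0\}$ your sentence reads ``if no nonzero invariant subspace is avoided by all $\gamma^0$-measures, then no stationary measure is slow.'' That is the core of the theorem, not a consequence of the definitions.

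By contrast, the ``uniform convergence'' you flag is not delicate. Your own Step 3, applied to the quotient chain, already puts every limit point of the normalized quotient log-norm, from any starting point, into the set of quotient $\alpha$-values.

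Two ingredients are missing.
\begin{enumerate}
\item[(a)] \textbf{A quotient lemma.} If $\nu\in\SA$ and $\nu(\overline L)=0$ for an invariant $L$, then the image $\bar\nu$ on $P(\R^d/L)$ is stationary and $\bar\alpha(\bar\nu)=\alpha(\nu)$. The two integrands differ by the coboundary $\log\delta(A\cdot u)-\log\delta(u)$, where $\delta(u)=\mathrm{dist}(\hat u,L)/\|\hat u\|$. This coboundary is integrable even though $\log\delta$ need not be. Its Birkhoff sums telescope to $\log\delta(u_n)-\log\delta(u_0)$, which is $o(n)$ in probability along the stationary chain, so its mean is zero.
\item[(b)] \textbf{Slow ergodic measures live on proper invariant subspaces.} Let $\nu'$ be ergodic with $\alpha(\nu')<\gamma^0$, and set $W'=\mathrm{span}\,\mathrm{supp}\,\nu'$. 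Then $W'$ is $\mu$-invariant and proper. Otherwise you could pick a basis of $\nu'$-generic vectors; these span $W'$ because they have full $\nu'$-measure. That basis would force $\limsup\frac1N\log\|X_N\cdots X_1\|\le\alpha(\nu')<\gamma^0$, a contradiction.
\end{enumerate}

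With (a) and (b), the step closes by induction on $d$.

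\emph{Case $L_1\neq\{0\}$.} Apply the theorem to $\R^d/L_1$, whose top exponent is $\gamma^0$ by (a). Suppose its first subspace were nonzero. Its preimage would then be an invariant subspace strictly containing $L_1$ that, by (a) again, no $\gamma^0$-measure charges, contradicting maximality. Hence every $v\notin L_1$ has quotient growth $\gamma^0$, which gives $\liminf\ge\gamma^0$.

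\emph{Case $L_1=\{0\}$.} Suppose some ergodic $\nu'$ were slow, and apply the theorem to $W'$. If the top exponent of $W'$ is $<\gamma^0$, then $W'$ itself contradicts $L_1=\{0\}$. If it equals $\gamma^0$, the $\nu'$-generic vectors grow at rate $\alpha(\nu')<\gamma^0$, so they lie in the first subspace $W'_1\subsetneq W'$; this contradicts the fact that they span $W'$. Hence all stationary measures have $\alpha=\gamma^0$, and your Step 3 finishes.

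Finally, (a) is also what justifies your closure-under-sums step, for which ``supported on a quotient'' is not an argument. Take an ergodic $\gamma^0$-measure charging, hence carried by, $\overline{Z_1+Z_2}$ and avoiding $\overline{Z_1}$. By (a) its exponent equals that of its image in $P((Z_1+Z_2)/Z_1)\cong P(Z_2/(Z_1\cap Z_2))$. That exponent is at most the top exponent on $Z_2$. The top exponent on $Z_2$ is $<\gamma^0$, because every stationary measure carried by $\overline{Z_2}$ charges it, so none attains $\gamma^0$, and the maximum is attained by compactness.
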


The $\{\gamma^i_\mu\}$ are called the Liapunov spectrum associated with the i.i.d.\ matrices $\{X_n\}$. The significant part of Theorem \ref{FuKe} for our purpose is part $(ii)$, which characterizes the Liapunov spectrum and the associated direct sum decomposition of the state space in terms of a Markov chain.\\

\section{Effect of residual connections}\label{resnet}

In order to map our problem to this framework, we consider $\{\beta_n\}$ i.i.d.\ with law (say) $\Gamma \in \PA(\R^s)$. We make the following additional assumption:\\

\noindent \textbf{Assumption 2:} The limit $x_\infty := \lim_{n\uparrow\infty}x_n$ exists a.s.\ in \eqref{dynamics0}.\\

We justify this as follows. In classifier DNNs, there are finitely many classes encoded as euclidean vectors $C = \{c_1, \cdots , c_k\} \subset \R^d$ (say) at the output and for $\mu$-a.s. $x_0$, the above limit $x_\infty \in C$ is well defined. Fix $x_0 = v$ in the probability $1$ set where this holds. Then Assumption 2 holds.\\

As we are considering the asymptotic regime, in what follows, we consider $x_n$ replaced by $x_\infty$. Furthermore, since $x_\infty$ is measurable with respect to the tail $\sigma$-field $\cap_{n \geq 1}\sigma(\beta_i, i \geq n)$ which is trivial by the Kolmogorov $0-1$ law, $x_\infty$ is a.s. a constant and we may take it to be a deterministic constant. Thus $D(\beta_n)(x_\infty), n \geq 0,$ are i.i.d.\ with law (say) $\upsilon$.\\

The ResNet architecture can be viewed as adding a forward connection, the so called `residual connection', that replaces every forward block $x \mapsto f_{\beta_i}(x)$ of the DNN by the map $x \mapsto x + f_{\beta_i}(x)$. We now explore how this affects the overall input-output map of the DNN in the limit as $N \uparrow \infty$, in view of the foregoing.\\

We do this by comparing the above maps for $x = \hat{u}$ as defined earlier with $\|\hat{u}\| = c > 0$ (say).  Since $\mu$ and therefore the set $\SA$ of possible values of $\nu$ is fixed, we focus on the quantity 
$$\xi(u, A) := \log(\|Au\|) = \log\left(\frac{\|A\hat{u}\|}{\|\hat{u}\|}\right),$$
and explore how it changes when $A$ is replaced by $I + A$.  \\

%
%
%

Now we are ready to prove our main result. \\

\begin{theorem}\label{final}  
The Liapunov spectrum under a residual connection is a smaller perturbation of the spectrum of the identity matrix (i.e., the vector of all $1$'s) than without a residual connection.
\end{theorem}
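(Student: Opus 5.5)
The plan is to work entirely through part $(ii)$ of Theorem \ref{FuKe}. By that result, the Liapunov exponents for the i.i.d.\ product with law $\mu$ are exactly the values $\alpha(\mu,\nu)$, with $\nu \in \SA$. With a residual connection, each Jacobian $A = D(\beta_n)(x_\infty)$ is replaced by $I + A$. So the residual spectrum is the set of values
$$\alpha(\mu',\nu') = \int\int \log\|(I+A)u\|\,\mu(dA)\nu'(du),$$
where $\mu'$ is the image of $\mu$ under $A \mapsto I+A$ and $\nu'$ ranges over the $\mu'$-stationary measures. For the identity matrix every direction is stationary and $\log\|Iu\| = 0$. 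Thus the reference spectrum is the degenerate one, the value $0$, i.e.\ growth factor $e^0 = 1$ in every direction. This is how I interpret ``the vector of all $1$'s.''

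The comparison would be made pointwise on the integrand $\xi(u,A) = \log(\|A\hat u\|/\|\hat u\|)$, before averaging.

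\textbf{Step 1: rescale.} Take the block map to be $x \mapsto x + \epsilon f_\beta(x)$, which is the regime in which residual connections are used. Without this, $I+A$ need not be closer to $I$ than $A$ is.

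\textbf{Step 2: bound the residual integrand.} For $\|u\| = 1$, the triangle inequality gives $1 - \epsilon\|A\| \le \|(I+\epsilon A)u\| \le 1 + \epsilon\|A\|$. Hence $|\xi(u, I+\epsilon A)| \le -\log(1-\epsilon\|A\|) = O(\epsilon\|A\|)$ whenever $\epsilon\|A\| < 1$. A second-order expansion sharpens this to $\xi(u,I+\epsilon A) = \epsilon\langle u, Au\rangle + O(\epsilon^2\|A\|^2)$.

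\textbf{Step 3: compare with the plain network.} Without the residual connection, $\xi(u,A) = \log\|Au\|$ is unconstrained. It is near $0$ only if $A$ is nearly an isometry along $u$, and it is generically of order $|\log\|A\||$ or $|\log\|A^{-1}\||$. These are exactly the quantities controlled, but not made small, by Assumption 1.

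\textbf{Step 4: integrate.} Integrate the bounds of Step 2 against $\mu(dA)\nu'(du)$ for an arbitrary $\mu'$-stationary $\nu'$. This gives
$$\max_i |\gamma^i(\mu')| \le \int -\log\bigl(1-\epsilon\|A\|\bigr)\mu(dA) = O\Bigl(\epsilon\int\|A\|\,d\mu\Bigr).$$
The bound is uniform over $\nu'$. This matters because $\SA$ changes when $\mu$ is replaced by $\mu'$, and a uniform bound means we never need to identify $\SA$ explicitly. The whole residual spectrum, together with the flag $L_i$ of Theorem \ref{FuKe}$(i)$, is then squeezed toward the trivial spectrum. The unperturbed spectrum $\{\alpha(\mu,\nu)\}$ stays at distance of order $\int|\log\|A\||\,d\mu$, which does not shrink.

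The main obstacle is making ``smaller perturbation'' precise. The inequality in Step 2 needs $\epsilon\|A\| < 1$ on the support of $\mu$, or at least an integrable control on the bad set where this fails. On that bad set $I+\epsilon A$ may be nearly singular, so $\log\|(I+\epsilon A)u\|$ can be very negative. I would first try to assume a compactly supported $\mu$, or a moment bound $\int\|A\|\,d\mu<\infty$, choose $\epsilon$ below $1/\operatorname{ess\,sup}\|A\|$, and use Assumption 1 for $\mu'$ to handle the tail by dominated convergence. The comparison with the non-residual spectrum would then be stated as this order-$\epsilon$ bound versus an $\epsilon$-independent quantity. I do not expect a sharp inequality holding for every $\mu$.
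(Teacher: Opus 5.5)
Your route is genuinely different from the paper's. The paper stays in projective space. For $\|\hat u\|=1$ it notes that $\Psi((I+A)\hat u)$, the direction of the diagonal of the parallelogram spanned by $\hat u$ and $A\hat u$, lies between $\Psi(\hat u)$ and $\Psi(A\hat u)$. From this it asserts $\iint\log\|(I+A)u\|\,\mu(dA)\nu(du)<\iint\log\|Au\|\,\mu(dA)\nu(du)$, with the same $\nu$ on both sides.

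You instead bound the integrand $\xi$ directly by the triangle inequality after rescaling $A\mapsto\epsilon A$. Because the bound is uniform in $\nu'$, you never need to identify the $\mu'$-stationary measures. This is what your approach buys, since the paper's step has three weaknesses:
\begin{enumerate}
\item it controls directions, not the log-norms that enter $\alpha$;
\item it treats $\SA$ as unchanged, although pushing $\mu$ forward under $A\mapsto I+A$ changes it;
\item it yields a one-sided inequality rather than closeness to $0$, and even that inequality can fail: for $\mu=\delta_I$ the two sides are $\log 2$ and $0$.
\end{enumerate}
Your extra hypotheses are the scale $\epsilon$ and the condition $\epsilon\,\mathrm{ess\,sup}\|A\|<1$. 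The latter also gives invertibility of $I+\epsilon A$ and Assumption 1 for $\mu'$. These hypotheses are what make a correct statement possible at all, not a loss of generality.

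You can even dispense with Theorem~\ref{FuKe}$(ii)$. Since $(1-\epsilon\|A\|)\|w\|\le\|(I+\epsilon A)w\|\le(1+\epsilon\|A\|)\|w\|$, the strong law of large numbers places every exponent of the product in $[\int\log(1-\epsilon\|A\|)\,d\mu,\int\log(1+\epsilon\|A\|)\,d\mu]$ directly, for every starting vector $v$.

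What still needs fixing is Step 3. The quantity $\int|\log\|A\||\,d\mu$ is not a lower bound on the distance of the unperturbed spectrum from $0$. For example, let $A=\mathrm{diag}(2,1/2)$ or $\mathrm{diag}(1/2,2)$ with probability $1/2$ each. Then all exponents of $\mu$ vanish, although $\int|\log\|A\||\,d\mu=\log 2$.

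So the comparison must be stated under the explicit hypothesis $\max_i|\gamma^i(\mu)|>0$. Under that hypothesis, your Step 4 bound gives $\max_i|\gamma^i(\mu')|<\max_i|\gamma^i(\mu)|$ for all $\epsilon$ below an explicit threshold. The hypothesis cannot be removed. If $A$ is a fixed rotation of the plane by $\theta$, the plain spectrum is $\{0\}$. But $I+\epsilon A$ has the single exponent $\tfrac12\log(1+2\epsilon\cos\theta+\epsilon^2)$, which is nonzero unless $\cos\theta=-\epsilon/2$.

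Finally, drop the claim that the flag $L_i$ is ``squeezed''. Your estimate concerns only the exponents, not the subspaces.
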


\begin{proof} It is easy to check that $\Phi(\hat{u}), \Psi(\hat{u})$ are independent of the $c = \|\hat{u}\|$ above from their very definition. Hence we can and do take $c = 1$ without any loss of generality. Now consider  
$$\hat{u}_0 := \hat{u}, \ \hat{u}_1 := A\hat{u}_0, \ \hat{u}_2 := (I + A)\hat{u}_0 = \hat{u}_0 + A\hat{u}_0.$$ 
Consider the two dimensional parallelopiped $B$ formed by $\0, \hat{u}_0, \hat{u}_1$ and $\hat{u}_2$. Let $\hat{u}_3:=$ the intersection of its diagonals. Then $\hat{u}_3$ is in the relative interior of $B$. Let $\hat{u}_i' := \Phi(\hat{u}_i), i = 0,1,3$. Then $u_i' \in B\cap S^d$ for $ i = 0,1,3$. It is then easy to see that $\hat{u}_3'$ lies in the relative interior of the arc $B\cap S^d$ joining $\hat{u}_0'$ and $\hat{u}_1'$. Therefore it is closer to $\hat{u}_0'$ than $\hat{u}_1'$. This property is  preserved under the map $\Psi$. That is, on mapping these vectors to the corresponding equivalence classes in $P^{d-1}$, $\Psi(\hat{u}_3')$ lies closer to $\Psi(\hat{u}_0)$ than $\Psi(\hat{u}_1)$, in the metric topology of $P^{d-1}$. 
Hence it follows that
\begin{eqnarray*}
\lefteqn{\int_{P^{d-1}}\int_\M \log\left(\|(I + A)u\|\right))\varphi(dA)\nu(du) \ -} \\
&& \  \int_{P^{d-1}}\int_\M \log\left(\|Au\|\right))\varphi(dA)\nu(du) \ < \ 0.
\end{eqnarray*}
The claim follows. \end{proof}

\medskip

 This result explains the stabilizing effect of residual connections on the Liapunov exponents of the DNN in a precise quantifiable sense.

\end{document}